\pgfplotsset{compat=1.15}
\newtheorem{thm}{Theorem}[section]
\newtheorem*{thm*}{Theorem}
\newtheorem{cor}[thm]{Corollary}	
\newtheorem*{cor*}{Corollary}		
\newtheorem{prop}[thm]{Proposition}
\theoremstyle{definition}
\newtheorem{definition}[thm]{Definition} 	
\newtheorem*{definition*}{Definition}    	
\newtheorem{example}[thm]{Example}
\numberwithin{equation}{section}
\DeclareMathOperator{\dist}{dist}
\DeclareMathOperator{\diam}{diam}
\DeclareMathOperator{\vol}{vol}
\DeclareMathOperator{\dil}{dil}
\DeclareMathOperator{\inj}{inj}
\DeclareMathOperator{\reach}{reach}
\DeclareMathOperator{\unp}{Unp}
\DeclareMathOperator{\Cyl}{Cyl}
\DeclareMathOperator{\fillrad}{FillRad}
\newcommand{\norm}[1]{\left\Vert#1\right\Vert}
\newcommand{\RR}{\mathbb{R}}
\newcommand{\eps}{\varepsilon}
\newcommand{\CP}{\mathbb{C}P}
\newcommand{\FF}{\mathbb{F}}
\newcommand{\sphere}{\mathrm{\mathbb{S}}}
\newcommand{\tildef}{\widetilde{f}}
\newcommand{\ZZ}{\mathbb{Z}}
\newcommand{\Linf}{L^\infty(M)}
\begin{document}



\title[Upper and lower bounds on the filling radius]{Upper and lower bounds on the filling radius \\ 
}


\author[M.~Cuerno]{Manuel Cuerno $^{\ast}$}

\author[L.~Guijarro]{Luis Guijarro$^{\ast\ast}$}


\thanks{$^*$Supported in part by the FPI Graduate Research Grant PRE2018-084109, and by research grants  
	 MTM2017-‐85934-‐C3-‐2-‐P 
from the Ministerio de Econom\'ia y Competitividad de Espa\~{na} (MINECO)} 

\thanks{$^{\ast\ast}$Supported by research grants  MTM2017-‐85934-‐C3-‐2-‐P from the  Ministerio de Econom\'{\i}a y Competitividad de Espa\~{na} (MINECO), and by ICMAT Severo Ochoa project SEV-‐2015-‐0554 (MINECO)}


\address[M.~Cuerno]{Department of Mathematics, Universidad Aut\'onoma de Madrid and ICMAT CSIC-UAM-UC3M, Spain}
\email{manuel.mellado@uam.es}

\address[L.~Guijarro]{Department of Mathematics, Universidad Aut\'onoma de Madrid and ICMAT CSIC-UAM-UC3M, Spain}
\email{luis.guijarro@uam.es}


\date{\today}


\subjclass[2020]{53C20,53C23}
\keywords{Filling radius, Kuratowski embedding, reach, Riemannian submersion, submetry}


\begin{abstract}
We give a curvature dependent lower bound for the filling radius of all closed Riemannian manifolds as well as an upper one 
for manifolds  which are the total space of a Riemannian submersion. The latter applies also to the case of submetries. We also see that the  reach (in the sense of Federer) of the image of the Kuratowski embedding vanishes, and we finish by giving  some inequalities involving the $k-$intermediate filling radius.
\end{abstract}
\setcounter{tocdepth}{1}

\maketitle





	\section{Introduction}
	
	Let $M^n$ be a closed connected $n$-manifold with a Radon measure $\mu$ and a distance  $\dist_M=\dist(p,q)$. The functions  $\dist_p(\cdot)=\dist(p,\cdot)$ lie in $L^{\infty}(M)$, the space  of real bounded functions on $M$ equipped with the sup-norm $\norm{\cdot}_{L^{\infty}(M)}$. The canonical inclusion 
	\[
		\Phi:M\to L^{\infty}(M), \qquad 
		p\to\dist_p,
	\]
	is  an isometric embedding known as the \textit{Kuratowski embedding}, since 
	\[
	\dist_{M}(p,q)=\norm{\dist_p-\dist_{q}}_{L^{\infty}(M)}
	\]
by the triangle inequality. We will identify $M$ with $\Phi(M)$ in what follows, hoping this will not create confusion. Given $r>0$, we will denote by $U_r(M)$ the $r$--neighbourhood of $M$ in $\Linf$, and by $\iota_r:M\hookrightarrow U_r(M)$ the inclusion.

	For a given coefficient group $\FF$, consider the homomorphism induced in $n$-homology by the inclusion 
	\[
	\iota_{r,*}:\mathrm{H}_n(M,\FF)\to\mathrm{H}_n(U_r(M),\FF).
	\]

In what follows, $M$ will have a Riemannian metric inducing the distance in $M$ and a Riemannian volume. We will consider $\FF=\ZZ$ when $M$ is orientable, and $\FF=\ZZ_2$ otherwise. 

	\begin{definition}[Filling Radius \cite{gromov}]
		The \emph{filling radius} of $M$, denoted by $\fillrad (M)$, is the infimum of those $r>0$ for which $\iota_{r,*}([M])=0$, where $[M]$ is the fundamental class of $M$.
 	\end{definition}
 	
%

Gromov introduced the filling radius in \cite{gromov} as a tool to obtain his systolic inequality, proving along the way, that $\fillrad(M)$ is bounded above by the $n$-th root of the volume times a constant depending only on the dimension of the manifold (see also \cite{guth}).
Upper bounds of the filling radius in terms of the diameter were obtained by Katz in \cite{katz}; Wilhelm studied it for manifolds with positive sectional curvature in  \cite{wilhelm}; see also \cite{yokota} for the extension to Alexandrov spaces. Later works include \cite{liu}, \cite{guth}, \cite{nabu}, and \cite{lim2020vietoris} just to include a few, 
but this should not be considered an exhaustive list. 

In general, it is difficult to compute the exact value of the filling radius for a particular Riemannian manifold. Its exact value has been computed in very few occasions (see \cite{katz}, \cite{wilhelm}, \cite{katz_CP2}), so it is useful to get bounds for it.  
In this direction, we start by obtaining a lower bound on the filling radius of a manifold that is valid for every closed Riemannian manifold. The same result, although stated in terms of the convexity radius, appears in \cite[pp.282--284]{petersen}; nonetheless, we consider our proof to have some interest, since it could help to understanding the change in the topology of tubular neighbourhoods of the Kuratowski embedding in the spirit of \cite{katzneig}. The reader can also consult \cite{liu}, although our estimate improves his by a factor of $\pi$ and removes an extra condition on the injectivity radius.

\begin{thm}
\label{thm:lower_bound}
     Let $M$ be a closed Riemannian manifold with injectivity radius $\inj M$ and with sectional curvature $\sec\leq \Delta$, where $\Delta\geq 0$. Then 
     \begin{equation}
     \label{eq:fillrad_ineq}
     \fillrad(M)\geq \frac{1}{4} \min\left\{\,\inj M\,,\,\frac{\pi}{\sqrt{\Delta}}\,\right\},
     \end{equation}
     where $\pi/\sqrt{\Delta}$ is understood as $\infty$ whenever $\Delta= 0$. 
 \end{thm}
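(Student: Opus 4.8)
The plan is to produce a lower bound on the filling radius by showing that if $r$ is \emph{smaller} than the quantity $\tfrac14\min\{\inj M,\,\pi/\sqrt\Delta\}$, then the inclusion $\iota_{r,*}$ cannot kill the fundamental class. The natural strategy is to build a retraction argument: I would try to construct a map $\rho:U_r(M)\to M$ (at least on the relevant homological level) such that $\rho\circ\iota_r$ is homotopic to the identity of $M$. If such a retraction exists, then $\iota_{r,*}$ is injective on $\homo_n(M,\FF)$, hence $\iota_{r,*}([M])\neq 0$, forcing $\fillrad(M)\geq r$; letting $r$ increase up to the bound gives the inequality. The whole point is therefore to identify a radius small enough that a well-defined nearest-point-type projection from the tubular neighbourhood back to $M$ exists inside $\Linf$.

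The key geometric input is the comparison between the extrinsic distance in $\Linf$ and the intrinsic distance in $M$. First I would record that for $p,q\in M$ the Kuratowski embedding is isometric, so $\norm{\dist_p-\dist_q}_{\Linf}=\dist_M(p,q)$, but for a point $f\in U_r(M)$ that is not on $M$ one only controls $\norm{f-\dist_p}_{\Linf}<r$ for the nearby $p$. The curvature hypothesis $\sec\leq\Delta$ together with the injectivity radius bound is exactly what controls how far one can go before geodesics fail to be minimizing or before conjugate points appear: under $\sec\leq\Delta$, the Rauch comparison theorem guarantees no conjugate points within distance $\pi/\sqrt\Delta$, and $\inj M$ controls the absence of short closed geodesics. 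The quantity $\min\{\inj M,\,\pi/\sqrt\Delta\}$ is precisely the threshold below which balls in $M$ are geodesically convex (this is the convexity radius, which explains the remark in the text that the same bound appears in Petersen stated via the convexity radius). I would exploit convexity of small metric balls to show that, within an $r$-neighbourhood for $r<\tfrac14\min\{\inj M,\,\pi/\sqrt\Delta\}$, the set of footpoints on $M$ of any $f\in U_r(M)$ lies in a convex ball, so one can define a continuous projection (for instance by a center-of-mass or nearest-point construction) $\rho$ without multivaluedness obstructions.

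The main obstacle I expect is twofold. First, nearest-point projection in the infinite-dimensional Banach space $\Linf$ is delicate: $\Linf$ is not uniformly convex, so footpoints need not be unique and the naive metric projection may fail to be continuous. I would circumvent this not by projecting metrically in $\Linf$ but by tracking, for $f\in U_r(M)$, the subset $A_f=\{p\in M:\norm{f-\dist_p}_{\Linf}\text{ is nearly minimal}\}$ and showing that $A_f$ has small intrinsic diameter in $M$, hence lies in a convex ball, allowing a canonical geometric projection. The factor $\tfrac14$ should emerge exactly from the diameter estimate: two near-footpoints $p,q$ satisfy $\dist_M(p,q)\leq\norm{\dist_p-f}+\norm{f-\dist_q}<2r$, and a further factor of $2$ comes from requiring this diameter to stay strictly inside the convexity radius so that the convex ball containing $A_f$ also has radius below the convexity threshold. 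Second, one must verify that this projection is genuinely continuous on all of $U_r(M)$ and that $\rho\circ\iota_r\simeq\mathrm{id}_M$; for the homotopy I would use that $\rho\circ\iota_r$ moves each point of $M$ by less than the convexity radius and so is connected to the identity by the geodesic homotopy $t\mapsto\gamma_{p\to\rho(\iota_r(p))}(t)$, which is well defined and continuous precisely because the relevant geodesics are unique minimizers in that range. Establishing the continuity and well-definedness of this center-of-mass projection in the Banach setting is the technical heart of the argument; everything else is a formal consequence via the induced maps on $\homo_n$.
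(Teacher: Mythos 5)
Your proposal follows essentially the same route as the paper's proof: both replace the ill-behaved metric projection in $\Linf$ by the vicinity set $A_f^r=\{p\in M:\norm{f-\dist_p}_{\Linf}\le r\}$, observe that its diameter is at most $2r$, so that for $r<\tfrac14\min\{\inj M,\pi/\sqrt{\Delta}\}$ it lies in a ball small enough for a unique Riemannian center of mass, and then use the resulting retraction $U_r(M)\to M$ to conclude that $\iota_{r,*}[M]\neq 0$. The ``technical heart'' you defer --- well-definedness and continuity of the center-of-mass map --- is precisely what the paper supplies: it puts the uniform probability measure $\nu_f$ on $A_f^r$, invokes Afsari's theorem for existence and uniqueness of the minimizer of $q\mapsto\tfrac12\int_M \dist^2(p,q)\,d\nu_f(p)$, and obtains continuity from the $L^1$-convergence of the characteristic functions $\chi_{A_{f_n}^r}\to\chi_{A_f^r}$ when $f_n\to f$ in $\Linf$.
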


This has a immediate consequence.  
  \begin{cor}    
     Let $M$ be a closed Riemannian manifold. Then $\fillrad(M)$ is strictly positive.
 \end{cor}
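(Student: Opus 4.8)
The plan is to invoke Theorem~\ref{thm:lower_bound} directly, so that the whole task reduces to checking that its two hypotheses---positivity of the injectivity radius and the existence of a finite upper curvature bound $\Delta\geq 0$---are automatically satisfied for an arbitrary closed Riemannian manifold. Neither verification requires any genuinely new work; both are standard consequences of compactness.

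First I would produce the curvature bound. Since $M$ is compact, the sectional curvature is a continuous function on the Grassmann bundle of tangent $2$-planes, which is itself compact, hence bounded above. Setting $\Delta:=\max\{0,\sup\sec\}$ gives a nonnegative upper bound for the sectional curvature, so that $\pi/\sqrt{\Delta}$ is strictly positive (and is interpreted as $\infty$ precisely when $\Delta=0$). Next I would recall that the injectivity radius of a compact Riemannian manifold is strictly positive: the pointwise injectivity radius is a continuous, everywhere positive function on $M$, and by compactness it attains a positive minimum, so $\inj M>0$.

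With both $\inj M$ and $\pi/\sqrt{\Delta}$ strictly positive, their minimum is strictly positive, and the inequality \eqref{eq:fillrad_ineq} furnished by Theorem~\ref{thm:lower_bound} immediately gives $\fillrad(M)>0$. There is no real obstacle in this argument; the only points that warrant care are the two compactness facts invoked above, and of these the slightly more delicate one is the positivity of the injectivity radius, which ultimately rests on the fact that a compact manifold has no conjugate points arbitrarily close to a given point and no arbitrarily short geodesic loops.
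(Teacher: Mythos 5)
Your proof is correct and follows exactly the route the paper intends: the corollary is stated there as an immediate consequence of Theorem~\ref{thm:lower_bound}, with the compactness facts ($\sup\sec<\infty$ and $\inj M>0$ on a closed manifold) left implicit, which you have simply spelled out.
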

As in \cite{petersen}, we should mention that the lower bound in Theorem \ref{thm:lower_bound} is optimal, since for the round sphere, 
\[
\fillrad\sphere^n=\frac{1}{2}\arccos\left(-\frac{1}{n+1}\right)\to \pi/4 \text{ when }n\to \infty.
\]
 
In \cite{katz}, Katz obtained upper bounds on the filling radius in terms of the diameter of the manifold. We show how his ideas can be extended to the case when $M$ is the total space of a Riemannian submersion.

	\begin{thm}
	\label{thm:submersion}
		Let $\pi:M\to B$ be a Riemannian submersion with $\dim M>\dim B$. Then 
		\begin{equation}
		\label{eq:fillrad_submersion}
		    \fillrad(M)\leq\frac12\max_{b\in B}\{\diam\pi^{-1}(b)\},
		\end{equation}
		where the diameter of each fiber is considered in the extrinsic metric. 
		
	\end{thm}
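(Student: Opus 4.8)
The plan is to exploit the hyperconvexity of $L^\infty(M)$ in order to collapse each fiber of $\pi$ to a single canonical point while staying inside a controlled neighbourhood of $M$; this pushes the fundamental class into the base $B$, where it must vanish for dimensional reasons. Write $D=\max_{b\in B}\{\diam\pi^{-1}(b)\}$ for the largest extrinsic fiber diameter and fix any $r>D/2$. The goal is to show $\iota_{r,*}([M])=0$, from which $\fillrad(M)\le D/2$ follows on taking the infimum over such $r$.

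First I would attach to each fiber a canonical center in $L^\infty(M)$. For $b\in B$ set
\[
c(b)=\tfrac12\Bigl(\sup_{q\in\pi^{-1}(b)}\dist_q+\inf_{q\in\pi^{-1}(b)}\dist_q\Bigr)\in L^\infty(M),
\]
the supremum and infimum being taken pointwise; these are bounded functions because $M$ is compact. The key estimate is $\norm{c(b)-\dist_p}_{L^\infty(M)}\le D/2$ for every $p\in\pi^{-1}(b)$. Indeed, setting $a(x)=\sup_{q}\dist_q(x)-\dist_p(x)\ge0$ and $a'(x)=\dist_p(x)-\inf_{q}\dist_q(x)\ge0$, the triangle inequality gives $a(x)+a'(x)=\sup_{q,q'}\bigl(\dist_q(x)-\dist_{q'}(x)\bigr)\le\diam\pi^{-1}(b)\le D$, and here the extrinsic metric enters precisely because $\norm{\dist_q-\dist_{q'}}_{L^\infty(M)}=\dist_M(q,q')$. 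Since $c(b)(x)-\dist_p(x)=\tfrac12\bigl(a(x)-a'(x)\bigr)$, we get $\abs{c(b)(x)-\dist_p(x)}\le\tfrac12\bigl(a(x)+a'(x)\bigr)\le D/2$. This is exactly the statement that a set of diameter $D$ in the hyperconvex space $L^\infty(M)$ is contained in a ball of radius $D/2$.

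With the center in hand, I would use the straight-line homotopy in the vector space $L^\infty(M)$,
\[
H\colon M\times[0,1]\to L^\infty(M),\qquad H(p,t)=(1-t)\,\dist_p+t\,c(\pi(p)).
\]
Every point of the segment from $\dist_p$ to $c(\pi(p))$ lies within $t\,\norm{c(\pi(p))-\dist_p}_{L^\infty(M)}\le D/2<r$ of the point $\dist_p\in M$, so $H$ takes values in $U_r(M)$. At $t=0$ it is the inclusion $\iota_r$, while at $t=1$ it equals $c\circ\pi$, which factors through $B$. Because $\dim B<n$ we have $\homo_n(B,\FF)=0$, hence $\pi_*[M]=0$ and $(c\circ\pi)_*[M]=0$; by homotopy invariance $\iota_{r,*}([M])=(c\circ\pi)_*[M]=0$, which is what we wanted.

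The main obstacle I anticipate is the continuity of the center map $c\colon B\to L^\infty(M)$, which is what makes $H$ a legitimate homotopy. This reduces to the continuity of $b\mapsto\sup_{q\in\pi^{-1}(b)}\dist_q$ into $L^\infty(M)$, and symmetrically for the infimum. Here I would combine the fact that $p\mapsto\dist_p$ is $1$-Lipschitz with the fact that the fibers of a Riemannian submersion of a closed manifold vary continuously in the Hausdorff metric (Ehresmann's theorem applies since $M$ is compact, so $\pi$ is a fiber bundle); concretely one checks $\norm{\sup_{\pi^{-1}(b')}\dist-\sup_{\pi^{-1}(b)}\dist}_{L^\infty(M)}\le d_{\mathrm{Haus}}\bigl(\pi^{-1}(b'),\pi^{-1}(b)\bigr)$. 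Once continuity is secured the rest is formal; as a sanity check, taking $B$ to be a point collapses all of $M$ and recovers the elementary bound $\fillrad(M)\le\tfrac12\diam M$.
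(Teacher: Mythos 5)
Your proof is correct, but its engine is genuinely different from the paper's. Both arguments share the same skeleton --- homotope the inclusion $\iota_r$ inside $U_r(M)$ to a map that factors through $B$, then use $\homo_n(B;\FF)=0$ since $\dim B<n$ --- but the paper realizes this by isometrically embedding $B$ into $L^\infty(M)$ as the shifted pullback functions $f_b=\dist_B(b,\pi(\cdot))+\rho_0$ and deforming $\dist_p$ to $f_{\pi(p)}$ via Katz's min/max interpolation $\dist_p^t$, which produces an embedded mapping cylinder of $\pi$ inside the closed $\rho_0$-neighbourhood; the key estimate $\norm{f_b-\dist_p}_{\infty}\leq\rho_0$ there uses both submersion properties (the inequality $\dist_M(p,z)\geq\dist_B(\pi(p),\pi(z))$ and the existence of $q$ in the fiber through $p$ with $\dist(q,z)=\dist_B(\pi(p),\pi(z))$). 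You instead take as target the hyperconvex center $c(b)=\frac12(\sup_q\dist_q+\inf_q\dist_q)$ of each fiber and use the linear homotopy of the Banach space; your estimate $\norm{c(b)-\dist_p}_{\infty}\leq D/2$ uses nothing but the triangle inequality and the extrinsic fiber diameter --- no submersion property at all --- the submersion structure entering only through the Hausdorff continuity of $b\mapsto\pi^{-1}(b)$, which you need to make $c$ continuous. This makes your argument somewhat more elementary and strictly more flexible: it applies verbatim to any partition of $M$ into closed sets of extrinsic diameter at most $D$ varying continuously in Hausdorff distance over a lower-dimensional space, so the paper's later extension to submetries and singular Riemannian foliations with closed leaves comes for free. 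One simplification you should make: Ehresmann's theorem is unnecessary (and unavailable in the submetry setting); the equidistance of fibers stated in the paper gives $d_{\mathrm{Haus}}(\pi^{-1}(b),\pi^{-1}(b'))=\dist_B(b,b')$ exactly, so $c$ is $1$-Lipschitz by your own estimate, and the whole proof stays purely metric. What the paper's construction buys in exchange is an isometric copy of $B$ and a genuine mapping cylinder of $\pi$ sitting inside the closed $\rho_0$-neighbourhood, rather than merely a homotopy through $U_r(M)$.
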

	
Our proof admits an extension to the case when the Riemannian submersion is replaced by a \emph{submetry} (see Section \ref{sec:submetry} for definitions); this has the advantage of including  quotient maps corresponding to isometric group actions, as well as manifolds $M$ admitting singular Riemannian foliations with closed leaves. 

By combining both bounds, we obtain some interesting inequalities for arbitrary Riemannian submersions:

\begin{cor}
    Let $\pi:M\to B$ a Riemannian submersion between closed manifolds. Then
\[
\frac12\min\left\{\,\inj M\,,\,\frac{\pi}{\sqrt{\Delta}}\,\right\}\leq
\max_{b\in B} \, \{\diam\pi^{-1}(b)\},
\]
where $\Delta$ is an upper positive bound for the sectional curvature of $M$.  
\end{cor}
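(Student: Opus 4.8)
The plan is simply to concatenate the two main theorems, so the argument is short once their hypotheses are verified. Since $M$ is closed it is compact, so its sectional curvature attains a finite maximum; choosing any $\Delta>0$ with $\sec\leq\Delta$ (for instance the maximum of the sectional curvature together with any positive number, to ensure $\Delta>0$) makes $\pi/\sqrt{\Delta}$ finite and positive. Compactness also guarantees $\inj M>0$, so the left-hand side of the claimed inequality is a well-defined positive quantity and Theorem~\ref{thm:lower_bound} applies with this choice of $\Delta$.

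First I would apply Theorem~\ref{thm:lower_bound} to $M$, obtaining
\[
\fillrad(M)\geq \frac14\min\left\{\,\inj M\,,\,\frac{\pi}{\sqrt{\Delta}}\,\right\}.
\]
Then, since a Riemannian submersion satisfies $\dim M\geq\dim B$ and the hypothesis $\dim M>\dim B$ of Theorem~\ref{thm:submersion} holds in the relevant (positive-dimensional fiber) case, I would invoke that theorem to get
\[
\fillrad(M)\leq\frac12\max_{b\in B}\{\diam\pi^{-1}(b)\}.
\]
Chaining the two displays and multiplying through by $2$ yields
\[
\frac12\min\left\{\,\inj M\,,\,\frac{\pi}{\sqrt{\Delta}}\,\right\}\leq\max_{b\in B}\{\diam\pi^{-1}(b)\},
\]
which is precisely the assertion.

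I do not expect any substantial obstacle here: the entire content of the corollary is carried by the two preceding theorems, and the proof is a one-line combination of the resulting upper and lower bounds on $\fillrad(M)$. The only points meriting a sentence of care are the two bookkeeping items already noted, namely that $\Delta$ can and should be taken strictly positive (so that the lower bound is genuinely positive rather than vacuous), and that the degenerate case $\dim M=\dim B$ — in which $\pi$ is a Riemannian covering and Theorem~\ref{thm:submersion} does not apply verbatim — is understood to be excluded by the submersion having fibers of positive dimension.
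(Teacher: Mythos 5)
Your proof is correct and is exactly the paper's argument: the corollary is presented there as an immediate combination of the lower bound of Theorem \ref{thm:lower_bound} with the upper bound of Theorem \ref{thm:submersion}, with no additional content. Your two bookkeeping remarks --- taking $\Delta>0$ and excluding the degenerate case $\dim M=\dim B$ (where the statement would fail, e.g.\ for an isometry) --- are sensible points that the paper leaves implicit.
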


The bound in Theorem \ref{thm:submersion} is sometimes better than the upper bound of $\diam(M)/3$ obtained by Katz in \cite{katz}; for instance, if $(\sphere^{2n+1}, g_a)$ are Berger spheres with $a\to 0$, then $\diam(\sphere^{2n+1}, g_a)\to \diam\CP^n=\pi/2$, while the diameters of the fibers of $\pi:(\sphere^{2n+1}, g_a)\to \CP^n$ tends to zero with $a\to 0$. 
Another example is presented in Section \ref{sec:submetry}.

We also show that the reach of the Kuratowski embedding is always zero, thus precluding the possibility of defining any metric projection on $M$ of $L^\infty$-neighbourhoods of $M$, even locally around a point. 

\begin{thm}
\label{thm:reach}
	Let $M^n$ be a closed Riemannian manifold. For every $p\in M$,
	\[
	\reach(p,M\subset L^{\infty}(M))=0.
	\]
\end{thm}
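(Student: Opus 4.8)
The plan is to show that at every point $p\in M$ the reach is zero by exhibiting, for each $\varepsilon>0$, a point $f\in L^\infty(M)$ arbitrarily close to $\Phi(p)$ that fails to have a unique nearest point on $\Phi(M)$. Recall that the reach of $\Phi(M)$ at $p$ is the supremum of radii $r$ such that every point of $L^\infty(M)$ within distance $r$ of $\Phi(p)$ has a unique closest point in $\Phi(M)$; equivalently, it is the distance from $p$ to the set $\unp(\Phi(M))^c$ of points with non-unique nearest projection. So it suffices to produce points with ambiguous projection accumulating at $\Phi(p)$.

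First I would record the elementary but crucial metric feature of the Kuratowski embedding: the distance in $L^\infty(M)$ between a generic function $f$ and the image point $\Phi(q)=\dist_q$ is
\[
\norm{f-\Phi(q)}_{L^\infty(M)}=\sup_{x\in M}\abs{f(x)-\dist(q,x)}.
\]
The key idea is that the $L^\infty$ geometry is extremely ``flat'': short segments can be built in $L^\infty(M)$ joining two image points $\Phi(q_1),\Phi(q_2)$ whose midpoints, or more cleverly chosen functions, are equidistant from both. Concretely, I would look at two distinct points $q_1,q_2\in M$ lying close to $p$ and consider the candidate function $f=\tfrac12(\dist_{q_1}+\dist_{q_2})$, or a suitable truncation/modification of it designed so that $\norm{f-\dist_{q_1}}_{L^\infty(M)}=\norm{f-\dist_{q_2}}_{L^\infty(M)}$ exactly, while this common value is small (of order $\dist(q_1,q_2)$). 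Such an $f$ has at least two nearest points on $\Phi(M)$, so it lies in the complement of the unique-projection set, and by taking $q_1,q_2\to p$ we push these bad points to $\Phi(p)$, forcing the reach to vanish.

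The main technical step, and the place I expect the real obstacle, is verifying that the two distances to $\Phi(q_1)$ and $\Phi(q_2)$ are genuinely equal \emph{and} that $f$ is not closer to some \emph{third} image point $\Phi(q_3)$: the reach is governed by the global nearest point, so I must rule out that the sup-norm distance is minimized elsewhere on $\Phi(M)$. To control this I would exploit the defining identity $\dist_M(q_1,q_2)=\norm{\dist_{q_1}-\dist_{q_2}}_{L^\infty(M)}$, which pins the separation of the two target points, and evaluate $f$ against $\dist_q$ by testing at the well-chosen sample points $x=q_1$ and $x=q_2$ (where $\dist_{q_i}$ vanishes), giving lower bounds $\abs{f(q_i)-\dist(q,q_i)}$ that force any competing $q$ to be no closer. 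A convenient concrete choice is a ``tent'' type perturbation: set $f(x)=\min\{\dist(q_1,x),\dist(q_2,x)\}+c$ for a small constant $c$ calibrated so the distances to both centres coincide; the minimum structure makes $f(q_1)=f(q_2)=c$ and keeps the relevant suprema easy to estimate.

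Finally I would assemble the estimate into the reach computation: choosing $q_1,q_2$ symmetric about $p$ on a short geodesic with $\dist(q_1,q_2)=2\delta$, the constructed $f$ satisfies $\norm{f-\Phi(p)}_{L^\infty(M)}=O(\delta)$ while having two (or more) equidistant nearest points on $\Phi(M)$, so $\reach(p,\Phi(M))\le O(\delta)$ for every $\delta>0$, whence it is $0$. The subtlety to watch is that, unlike the finite-dimensional Federer setting, $L^\infty(M)$ is not strictly convex, so nearest points are never automatically unique; I expect this non-strict-convexity to be exactly what makes the construction succeed, and the proof essentially makes that heuristic quantitative. I would double-check the edge behaviour near the cut locus of $q_1,q_2$, since that is where $\dist_{q_i}$ is only Lipschitz and not smooth, but because the argument uses only the triangle inequality and values at the two basepoints, smoothness is not actually needed.
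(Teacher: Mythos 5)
Your construction is the right one, and it is essentially the paper's: the paper takes $q_1=p$ and $q_2=q$ with $\dist_M(p,q)=\delta$ small, uses precisely the midpoint $f=\tfrac12(\dist_p+\dist_q)$, checks $\|f-\dist_p\|_{\infty}=\|f-\dist_q\|_{\infty}=\delta/2$, and then rules out closer points. However, the step you yourself identify as the crux --- ruling out a third point $\dist_q$ strictly closer to $f$ --- is carried out incorrectly in your plan. You propose to test the sup-norm at the sample points $x=q_1$ and $x=q_2$. This does not suffice: with $\dist(q_1,q_2)=2\delta$ and either of your candidates (the midpoint $f=\tfrac12(\dist_{q_1}+\dist_{q_2})$ or the tent $f=\min\{\dist_{q_1},\dist_{q_2}\}+\delta$), one has $f(q_1)=f(q_2)=\delta$, so the test at $x=q_i$ only yields $|\delta-\dist(q,q_i)|<\delta$, which fails to exclude any competitor $q$ with $0<\dist(q,q_i)<2\delta$; for instance, the midpoint $p$ of the geodesic passes both tests (it is in fact equidistant, not closer, but your tests cannot detect this).

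The correct test --- and the one the paper uses --- is evaluation at the competing point itself: if $\|\dist_q-f\|_{\infty}<\delta$, then in particular $|f(q)-\dist_q(q)|=f(q)<\delta$; but by the triangle inequality
\[
f(q)=\tfrac12\left(\dist(q_1,q)+\dist(q_2,q)\right)\geq\tfrac12\dist(q_1,q_2)=\delta
\]
(and likewise $f(q)\geq\delta$ for the tent function), a contradiction. This is a one-line repair, fully in the spirit of your own remark that only the triangle inequality and evaluation at well-chosen points are needed; with it, your argument is complete and coincides with the paper's proof. But as literally described, the exclusion step fails, and since non-strict convexity of $L^{\infty}(M)$ produces many equidistant configurations, verifying that the minimum of $q\mapsto\|\dist_q-f\|_{\infty}$ is not attained elsewhere is exactly the point that cannot be waved through.
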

The reader can see the definitions, as well as the proof, in Section \ref{sec:reach}.

To finish the paper, we consider the intermediate filling radius introduced in \cite{lim2020vietoris} and obtain some bounds similar to Theorem \ref{thm:lower_bound} and \cite[Theorem 1.1]{liu}.  

\emph{Acknowledgements:} We would like to thank Fernando Galaz Garc\'{\i}a, David Gonz\'alez \'Alvaro and Fred Wilhelm for useful comments on a preliminary version of this paper. 

 \section{Proof of Theorem \ref{thm:lower_bound}}
 \label{sec:lower_bound}
 
 In this section, we give a lower bound for the filling radius of an arbitrary closed Riemannian manifold in terms of its injectivity radius and an upper curvature bound.

 
  \begin{proof}[Proof of Theorem \ref{thm:lower_bound}]
 We will show that there is a continuous retraction of the tubular neighbourhood $\Phi:U_R(M)\to M$, by associating to each function $f\in U_R(M)$ the center of mass of a set in $M$ associated to $f$ (see \cite{grokar}, \cite{kar}, \cite{afsari}). To improve readability, we will denote the right hand side in \eqref{eq:fillrad_ineq} by $R_0$. 
 For each $R>0$, denote by $U_R(M)$ the open $R$-neighbourhood of $M$ inside $L^\infty(M)$, i.e.,
 \[
 U_R(M)=\left\{\,f\in L^\infty(M)\,:\, \dist(f,M)<R\,\right\}.
 \]
 Whenever $f\in U_R$, we define the \emph{vicinity set of $f$} as 
 \[
  A^R_f:=\left\{\,p\in M\,:\, \|f-\dist_p\|\leq R\,\right\}.
 \]
 It is clear from the definition that the sets $A^R_f$ are closed with nonempty interior, and for $R\leq R'$, there is an inclusion
$A^R_f \subset 	A^{R'}_f $; we also have that if $p,q\in A^R_f$, then 
 \[
\dist(p,q)=\norm{\dist_p-\dist_q}_{\infty}\leq 2R,
 \] 
 by the triangle inequality, thus $\diam A^R_f\leq 2R$.
 Furthermore, if $p\in A_f^R$, and $g\in L^\infty(M)$, then 
\[
\norm{\dist_p-g}_{\infty}\leq 
\norm{f-g}_\infty+\norm{\dist_p-f}_\infty,
\]
and therefore $p\in A_g^{R+\norm{g-f}_\infty}$. Interchanging the roles of $f$ and $g$, we get the sequence of inclusions
\[
A_f^R\subset A_g^{R+\norm{g-f}_\infty} \subset 
A_f^{R+2\norm{g-f}_\infty}. 
\]
Notice also that, as $\eps\to 0$, we get that 
\[
A_f^{R+\eps}\to A_f^R, \quad \text{that is, }
\bigcap_{\eps>0}A_f^{R+\eps} = A_f^R.
\]
 
 We will denote the characteristic function of a set $A$ as $\chi_A$ and the Riemannian measure of $M$ as $d\vol$.
 If we have a sequence $f_n\to f$ in $L^\infty(M)$, we have that, writing $\eps_n=\norm{f_n-f}$, we obtain 
 \[
 \int_M \,|\chi_{A_{f_n}^R}-\chi_{A_f^R}|\,d\vol
 \leq
 \int_M \,|\chi_{A_{f_n}^R}-\chi_{A_f^{R+\eps_n}}| \,d\vol
 +
 \int_M \,|\chi_{A_{f}^{R+\eps_n}}-\chi_{A_f^{R}}| \,d\vol,
 \]
 and thus
 \begin{equation}
     \label{eq:convergence_functions}
 \int_M \,|\chi_{A_{f_n}^R}-\chi_{A_f^R}|\,d\vol \to 0,
  \end{equation}
 as $n\to\infty$.
 
 
The set $A^R_f$ has nonempty interior, since it contains the set of points with $\norm{f-\dist_q}<R$, thus its volume in $M$ does not vanish. We can then consider the probability measure in $M$ defined as
 \[
 \nu_f=\frac{1}{\vol(A_f)}\,\chi_{A_f}\,d\vol,
 \]
 where $\vol$ is the Riemannian volume and $\chi_{A_f}$ is the characteristic function of the set $A_f$. Its support is $A_f$, and thus it has diameter less or equal than $2R$.  
 Observe that as $f_n\to f$ in $L^\infty$, the characteristic functions of $A_{f_n}$ converge to the characteristic function of $A_f$, and consequently, for any continuous function $g:M\to\RR$,
 \[
 \int_M\, g(x) d\nu_{f_n} \to  \int_M\, g(x) d\nu_{f}. 
 \]

Let $0<R<R_0$, where $R_0$ was defined  as the right hand side in inequality \eqref{eq:fillrad_ineq}. For any $f\in U_R$, its vicinity set is contained in a ball of radius $2R$, thus the main result in \cite[Theorem 2.1]{afsari} can be applied to the measure $\nu_f$ to obtain a unique point $p\in M$ characterized as the single minimizer of the function 
\[
F^f_2:M\to \RR, \qquad F_2(q):=\frac{1}{2}\int_M\, d^2(p,q)\,d\nu_f(p). 
\]
From equation \eqref{eq:convergence_functions}, it is clear that the assignment $\Phi:U_R(M)\to M$ mapping $f$ to $p$ is continuous. Moreover, when $f=\dist_p$, $A_f^R=B_R(p)$, and the minimum of $F^f_2$ agrees with $p$, thus $\Phi$ is a retraction, and 
\[
M\xrightarrow{ \ \iota_{R_0} \ } U_{R_0}(M)\xrightarrow{ \ \Phi \ } M
\]
is the identity map and  $(\Phi\circ \iota_{R_0})_\#[M]=[M]$. However, if $R_0>\fillrad(M)$, $(\iota_{R_0})_\#[M]=0$.
This finishes the proof.
 \end{proof}

	\section{Filling radius bounds of Riemannian submersions}
	Recall that a \emph{Riemannian submersion} is a submersion $\pi:M^{n+m}\to B^n$ between Riemannian manifolds such that for any point $p\in M$, the differential $d\pi_p$ preserves the length of vectors orthogonal to the fiber through $p$. 
	This implies that the fibers of $\pi$ give a partition of $M$ into equidistant sets, that is, for any $b_0,b_1\in B$, and any $z_0\in \pi^{-1}(b_0)$, the distance from $z_0$ to $\pi^{-1}(b_1)$ is equal to $\dist_B(b_0,b_1)$.  
	
	
Next, we give bounds on the filling radius of the total space of a Riemannian submersion, as stated in the introduction. 

	\begin{proof}[Proof of Theorem \ref{thm:submersion}]
	To facilitate the writing, denote by $\rho_0$ the right hand side in \eqref{eq:fillrad_submersion}
		As $\pi$ is a Riemannian submersion, we can isometrically embed $B\hookrightarrow L^{\infty}(M)$ with the map
		\begin{equation}
		    \varphi_1:B\to L^{\infty}(M), \qquad b\to\dist_b(\pi(\cdot)),
		\end{equation}
where $\dist_b(\pi(z)):=\dist_B(b,\pi(z))$, for all $z\in M$.

	Next, we translate this embedding by adding  $\rho_0$, that is, we have an isometric embedding
	\[
	\varphi:B\to L^{\infty}(M), \qquad b\to f_b,
	\] 
	where
	\[
    f_b:M\to\RR, \qquad	f_b(z):=\dist_b(\pi(z))+\rho_0.
	\]
	
	We will construct a deformation retract of $M$ onto the image $\varphi(B)$ adapting the main idea in 
\cite[Lemma 1]{katz} to our situation. The difference will be that, instead of constructing a cone over $M$ in a tubular neighbourhood of $M$, we will construct a \emph{mapping cylinder} of the Riemannian submersion $\pi:M\to B$.  For this, we define for each $t\in [0,D]$, and for each $p\in\pi^{-1}(b)$, the function 
	\[
		\dist_p^t(z)=
			\begin{cases}
		\min \ \{\dist_p(z)+t,f_b(z)\}, &\text{ if }\dist_p(z)<f_b(z)\\
		\max \, \{\dist_p(z)-t,f_b(z)\}, &\text{ if }\dist_p(z)\geq f_b(z). 
			\end{cases}
	\]
	We start by proving that $\norm{\dist_p^t-\dist_p}_{\infty}\leq t$. For this, we consider two cases:
		
		\begin{enumerate}
			\item When $\dist_p(z)<f_b(z)$, the function  $\dist_p$ is given by $\min \ \{\dist_p(z)+t,f_b(z)\}$; then we need to estimate 
			\[
			\max_{\{z\in M \, : \, \dist_p(z)<f_b(z)\}}  |\min \, \{\dist_p(z)+t,f_b(z)\}-\dist_p(z)|.
			\] 
			For this, we are going to use the sets: 
			\begin{alignat*}{3}
				&A \ &=& \ \{z  :  \dist_p(z)<f_b(z)\},\\
				&B \ &=& \ \{z  :  \dist_p(z)+t>f_b(z)\},\\
				&B^c \ &=& \ \{z  :  \dist_p(z)+t\leq f_b(z)\}.
			\end{alignat*}
				If $z\in A\cap B^c$, we have that 
		\[
		\max_{z\in A\cap B^c} \, |\dist_p^t(z)-\dist_p(z)|=\max_{z\in A\cap B^c} \, |\dist_p(z)+t-\dist_p(z)|=t.
		\]
		On the other hand, 
		\[
		A\cap B=\{z  :  \dist_p(z)<f_b(z)<\dist_p(z)+t\}=\{z  :  0<f_b(z)-\dist_p(z)<t\},
		\]
		 so, if $z\in A\cap B$, we have that 
		 \[
		 \max_{z\in A\cap B} \, |\dist_p^t(z)-\dist_p(z)|=\max_{z\in A\cap B} \, |f_b(z)-\dist_p(z)|<t.
		 \]
		Therefore, on $A$ we have obtained that 
		\[
		|\dist_p^t-\dist_p|\leq t.
		\]
		
		\item Suppose next that  $z\in A^c$, i.e., $\dist_p(z)\geq f_b(z)$. Now, we have to compute 
		\[
		\max_{\{z\in M \,  : \, \dist_p(z)\geq f_b(z)\}}  |\max \, \{\dist_p(z)-t,f_b(z)\}-\dist_p(z)|.
		\]
		In this case, we take the partition of $M$ given by the sets 
		\begin{alignat*}{2}
			&C \ &=& \ \{z  :  \dist_p(z)-t>f_b(z)\}.\\
			&C^c \ &=& \ \{z  :  \dist_p(t)-t\leq f_b(z)\}.
		\end{alignat*}
	If $z\in A^c\cap C$, we have that 
	\[
	\max_{z\in A^c\cap C} \, |\dist_p^t(z)-\dist_p(z)|=\max_{z\in A^c\cap C} \, |\dist_p(z)-t-\dist_p(z)|=t.
	\]
	On the other hand, 
	\[
	A^c\cap C^c=\{z  :  \dist_p(z)\geq f_b(z)\geq\dist_p(z)-t\}=\{z  :  0\leq\dist_p(z)-f_b(z)\leq t\},
	\]
	and 
	\[
	\max_{z\in A^c\cap C^c} \ |\dist_p^t(z)-\dist_p(z)|=\max_{z\in A^c\cap C^c}|f_b(z)-\dist_p(z)|\leq t.
	\]
    Thus, on $A^c$ we have that $|\dist_p^t-\dist_p|\leq t$, and combining it with the above, we have finally obtained that 
	\[
	\norm{\dist_p^t-\dist_p}_{\infty}\leq t.
	\]
		\end{enumerate}
		
		We prove next that for $\rho_0$ as defined as the upper bound in \eqref{eq:fillrad_submersion}, we have that 
		\[
		\dist_p^{\rho_0}=f_b, \qquad \text{where } b=\pi(p).
		\]
		Once again, we divide into cases:
		\begin{enumerate}
		    \item If $z\in A$, then our claim is clear, since for any Riemannian submersion, 
		    \[
		    \dist(p,z)\geq \dist_B(\pi(p),\pi(z)),
		    \]
		    and thus 
		    \[
		    f_b(z)=\dist_B(\pi(p),\pi(z))+\rho_0 \leq \dist(p,z)+\rho_0.
		    \]
		    \item On the other hand, for any $z\in M$, we have that there is at least one point $q$ in the fiber through $p$ with $\dist(z,q)=\dist_B(\pi(z),\pi(p))$, thus  (see Figure 1)
		    \[
		    \dist(p,z)\leq \dist(p,q)+\dist(q,z)\leq 
		    \diam \pi^{-1}(b) + \dist_B(\pi(z),\pi(p))\leq 
		    f_b(z)+\rho_0.
		    \]
		\end{enumerate}
			\begin{figure}[htbp]\centering\includegraphics[width=350pt]{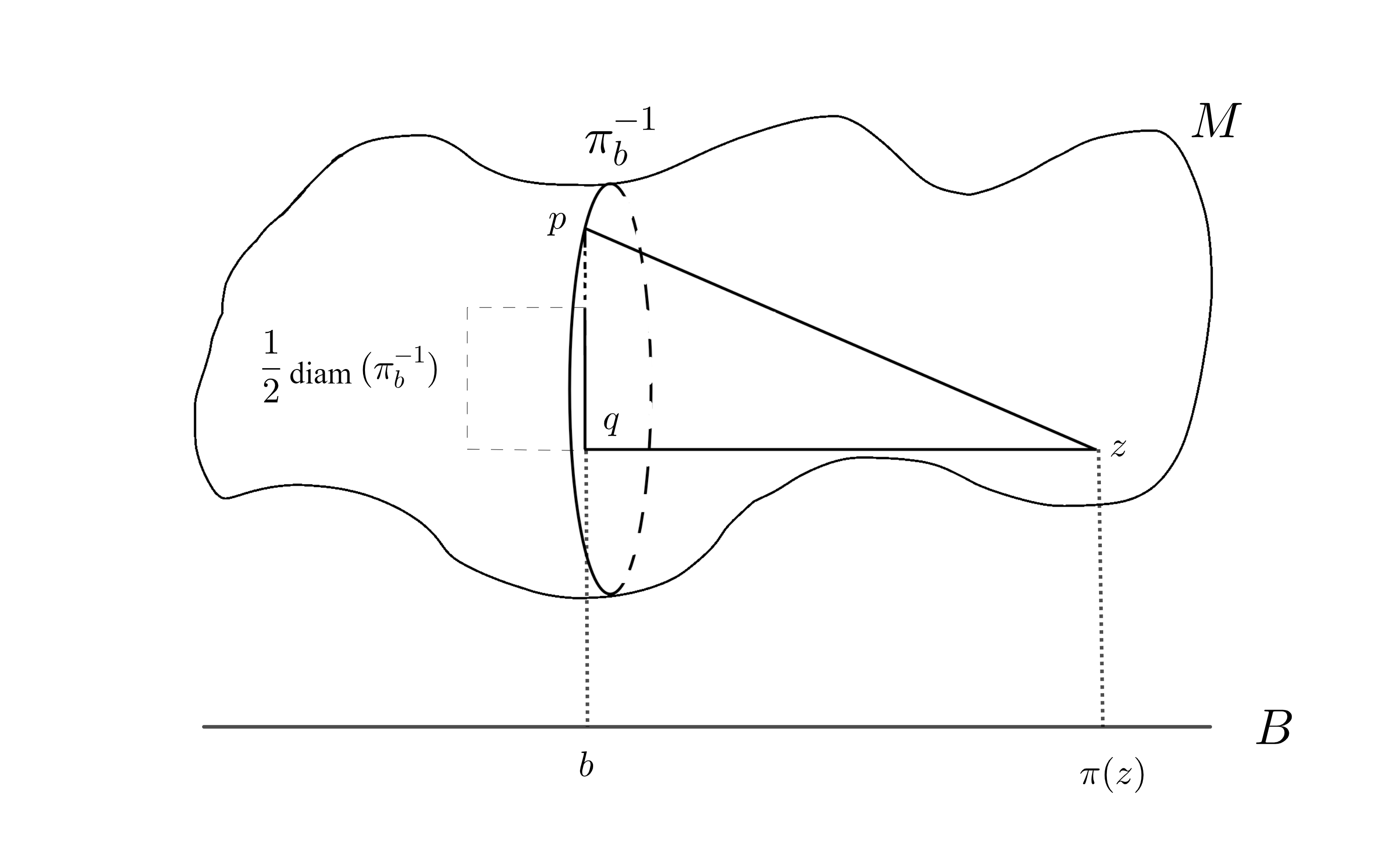}
			\caption{}
			\end{figure}
	
	Denoting by $\Cyl_\pi$  the cylinder map of $\pi:M\to B$,
	we have constructed an embedding of $\Cyl_\pi$ in the $\rho_0$-closed tubular neighbourhood of $M$; since $M$ retracts to $B$ in $\Cyl_\pi$, and $\mathrm{H}_n(B,\ZZ)=0$, we obtain that $\fillrad (M)\leq \rho_0$.
 	\end{proof}

There is a better estimate for Riemannian products, since Gromov proved in \cite{gromov} that the filling radius of a product satisfies
$\fillrad(M_1\times M_2)=\min(\fillrad (M_1),\fillrad (M_2))$. 
For warped products our Theorem provides: 
\begin{cor}
    For $B,F$ closed Riemannian manifolds, let $f:B\to (0,\infty)$ be a smooth function, and $M=B\times_f F$ the warped product over $B$ with fiber $F$. Then 
    \[
    \fillrad(M)\leq
    \min\{\,\fillrad(B), \frac12 \max f\cdot \diam F\,\}.
    \]
\end{cor}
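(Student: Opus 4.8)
The plan is to establish the two quantities in the minimum separately, exploiting two different faces of the warped-product structure. Write $c:=\max_B f$ and $n:=\dim M=\dim B+\dim F$. The common starting point is that the projection $\pi:M=B\times_f F\to B$ is a Riemannian submersion: at a point $(b,x)$ the fibre $\{b\}\times F$ has vertical tangent space $T_xF$, its orthogonal complement is $T_bB$, on which the metric is exactly $g_B$, and $d\pi_{(b,x)}$ restricts to the identity of $T_bB$; hence $d\pi$ preserves horizontal lengths.

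First I would obtain the term $\tfrac12\max f\cdot\diam F$ directly from Theorem~\ref{thm:submersion}. The fibre $\pi^{-1}(b)=\{b\}\times F$ carries the induced metric $f(b)^2 g_F$, so its \emph{intrinsic} diameter equals $f(b)\,\diam F$; since for two points of a submanifold the ambient distance never exceeds the intrinsic one, the \emph{extrinsic} diameter of each fibre is at most $f(b)\,\diam F\le c\,\diam F$. Substituting into \eqref{eq:fillrad_submersion} gives
\[
\fillrad(M)\le\tfrac12\max_{b\in B}\{\diam\pi^{-1}(b)\}\le\tfrac12\,c\,\diam F=\tfrac12\max f\cdot\diam F .
\]

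For the term $\fillrad(B)$ I would instead compare the warped metric with the genuine product metric of constant warping $c$. Since $f\le c$ on $B$, one has $g_B+f^2g_F\le g_B+c^2g_F$ as quadratic forms, whence $d_{\mathrm{warp}}\le d_{\mathrm{prod}}$ on $M$ and the identity
\[
\mathrm{id}:(M,\,g_B+c^2g_F)\longrightarrow(M,\,g_B+f^2g_F)
\]
is $1$-Lipschitz of degree $1$. The crucial ingredient is then the monotonicity of the filling radius under $1$-Lipschitz maps of degree $\pm1$ between closed $n$-manifolds (coefficients in $\ZZ$ in the orientable case and in $\ZZ_2$ otherwise): such a map $\phi:X\to Y$ satisfies $\fillrad(Y)\le\fillrad(X)$. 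Granting this, the displayed identity yields $\fillrad(M)\le\fillrad(M,\,g_B+c^2g_F)$, and the right-hand side is the filling radius of the Riemannian product $B\times(cF)$ with $cF:=(F,c^2g_F)$. Gromov's product formula \cite{gromov}, together with the scaling relation $\fillrad(cF)=c\,\fillrad(F)$, then gives
\[
\fillrad(M,\,g_B+c^2g_F)=\min\{\fillrad(B),\,c\,\fillrad(F)\}\le\fillrad(B).
\]
Combining the two estimates proves the stated inequality (and in fact sharpens the second slot to $c\,\fillrad(F)$).

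The step I expect to cost the most work is the monotonicity lemma, which I would prove in the style of the rest of the paper. Post-composing $\phi$ with the Kuratowski embedding of $Y$ gives a $1$-Lipschitz map $X\to L^\infty(Y)$; since $L^\infty(Y)$ is injective (hyperconvex), this extends along the Kuratowski embedding $X\hookrightarrow L^\infty(X)$ to a $1$-Lipschitz map $\widetilde\phi:L^\infty(X)\to L^\infty(Y)$ whose restriction to $X$ is the Kuratowski embedding of $Y$ precomposed with $\phi$. For every $r$ this $\widetilde\phi$ carries $U_r(X)$ into $U_r(Y)$, so pushing forward a filling of $[X]$ in $U_r(X)$ produces, inside $U_r(Y)$, a filling of $\phi_*[X]=\pm[Y]$. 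Hence $[Y]$ bounds in $U_r(Y)$ for every $r>\fillrad(X)$, which is exactly $\fillrad(Y)\le\fillrad(X)$. The only point needing care is that the degree hypothesis yields $\phi_*[X]=\pm[Y]$ with the coefficient conventions above; for the identity map used here this is automatic.
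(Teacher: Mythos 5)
Your proof is correct, and it splits naturally into two halves of different character. For the term $\frac12\max f\cdot\diam F$ you argue exactly as the paper does: the projection $B\times_f F\to B$ is a Riemannian submersion whose fibre over $b$ has extrinsic diameter at most its intrinsic diameter $f(b)\,\diam F$, so Theorem \ref{thm:submersion} applies directly. For the term $\fillrad(B)$, however, your route genuinely differs from the paper's. The paper disposes of this term in one line by re-running Gromov's product argument (\cite{gromov}, pp.~8--9) on the warped product itself, whereas you reduce to an honest product: you dominate the warped metric by $g_B+(\max f)^2g_F$, observe that the identity is then $1$-Lipschitz of degree one onto $(M,g_B+f^2g_F)$, invoke monotonicity of the filling radius under $1$-Lipschitz degree-$\pm1$ maps, and finish with Gromov's product formula (which the paper itself quotes) plus the scaling identity $\fillrad(F,c^2g_F)=c\,\fillrad(F)$. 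Your monotonicity lemma, proved via hyperconvexity of $L^\infty$ and the inclusion $\widetilde\phi(U_r(X))\subset U_r(Y)$, is exactly the mechanism of the paper's Theorem \ref{intermedios} (following \cite{liu}) specialized to $C=1$ and the fundamental class, so it is fully consistent with the paper's toolkit even though the paper does not deploy it here. What each approach buys: the paper's is shorter and stays inside the warped-product geometry, at the cost of deferring all details to Gromov's original pages; yours is more modular and actually yields the sharper conclusion $\fillrad(M)\le\min\{\fillrad(B),\,\max f\cdot\fillrad(F)\}$, which by Katz's inequality $\fillrad(F)\le\frac13\diam F$ subsumes the stated $\frac12\max f\cdot\diam F$ term, so your second argument alone already proves the corollary.
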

\begin{proof}
$\fillrad(M)$ can not exceed half the maximum diameter of the fibers by the previous Theorem; by the definition of warped product, this explains the second term appearing in the above minimum. $\fillrad(B)$ appears by exactly the same explanation as in \cite[Pages 8-9]{gromov}
\end{proof}

\section{Filling radius of submetries}
\label{sec:submetry}
	
Recall that a submetry between metric spaces is a map $\pi:X\to B$ such that 
for every $p\in X$, any closed ball $B(p,r)$ of radius $r>0$ centred at $p$ maps onto the ball $B(\pi(p),r)$. They were introduced in \cite{berestovski}, as a purely metric version of Riemannian submersions. In fact, in \cite{guijaandbere}, Berestovskii and the first author proved that, when $X$ and $Y$ are Riemannian manifolds, submetries agree with $C^{1,1}$--Riemannian submersions.

As it is remarked on \cite{lim2020vietoris}, the definition of filling radius does not require the distance of $M$ to come from a  Riemannian metric. It suffices that $\widehat{\dist}_M$ generates the manifold topology. We call any $(M,\widehat{\dist}_M)$ satisfying this condition a \textit{metric manifold}.

When going over the proof of Theorem \ref{thm:submersion}, it is clear that the proof is entirely metric, and, except for the total space needing enough structure to have a fundamental class, the rest of the arguments carry verbatim to provide the following result: 

	\begin{cor}\label{cor:submetry}
		Let $(X, \widehat{\dist}_{X})$ be a metric manifold (i.e, a closed manifold with a distance), $(Y, \dist_{Y})$ a metric space and $\pi:X\to Y$ a submetry between them. Thus 
		\[
		\fillrad(X)\leq\frac12\max_{y\in Y}\{\diam\pi^{-1}(y)\}.
		\]
	\end{cor}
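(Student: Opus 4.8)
The plan is to mimic the proof of Theorem \ref{thm:submersion} word for word, checking that every geometric fact used there is in fact a consequence of the submetry axiom rather than of the smooth Riemannian structure. The three ingredients I would isolate from that proof are: (i) the existence of an isometric embedding $\varphi_1:Y\to L^{\infty}(X)$ given by $y\mapsto \dist_Y(y,\pi(\cdot))$; (ii) the fiberwise distance inequality $\dist_X(p,z)\geq \dist_Y(\pi(p),\pi(z))$; and (iii) the ``retraction'' inequality, namely that for any $z\in X$ there is a point $q$ in the fiber $\pi^{-1}(\pi(p))$ with $\dist_X(z,q)=\dist_Y(\pi(z),\pi(p))$. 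Once these three are established in the submetry setting, the cut-off functions $\dist_p^t$, the estimate $\norm{\dist_p^t-\dist_p}_\infty\leq t$, and the identification $\dist_p^{\rho_0}=f_b$ all go through verbatim, since those parts are pure arithmetic on real numbers and do not see the underlying geometry at all.

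First I would verify (i). For a submetry $\pi:X\to Y$, the defining property that $B(p,r)$ maps \emph{onto} $B(\pi(p),r)$ forces $\pi$ to be $1$-Lipschitz and in fact to satisfy $\dist_Y(\pi(p),\pi(z))\leq \dist_X(p,z)$, which is exactly inequality (ii); so (ii) is immediate from the definition. The isometric embedding $\varphi_1$ then follows because $\dist_Y$ composed with the surjective $1$-Lipschitz map $\pi$ still realizes distances in $Y$ as sup-norm distances in $L^\infty(X)$, the computation being identical to the one establishing the Kuratowski embedding. For (iii), I would argue that given $z\in X$ and the target fiber $\pi^{-1}(\pi(p))=\pi^{-1}(b)$, the surjectivity of closed balls gives exactly a point $q$ in that fiber with $\dist_X(z,q)=\dist_Y(\pi(z),b)$: indeed setting $r=\dist_Y(\pi(z),\pi(z'))$ for a suitable base point and unwinding the onto condition produces a point in the fiber at precisely the base distance. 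This is the submetry analogue of the ``lift of a minimizing geodesic to the fiber'' used in the Riemannian proof.

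With (i)--(iii) in hand, the construction of the mapping cylinder embedding proceeds exactly as before: one embeds $\Cyl_\pi$ into the $\rho_0$-closed tubular neighbourhood of $X$ in $L^\infty(X)$, observes that $X$ deformation retracts onto $\varphi(Y)$ inside this cylinder, and concludes $\fillrad(X)\leq\rho_0=\frac12\max_{y\in Y}\{\diam\pi^{-1}(y)\}$ from $\mathrm{H}_n(Y,\FF)=0$ (the latter holds because $\dim Y<\dim X=n$, so $Y$ has no top-dimensional homology). The only structural requirement on $X$ beyond being metric is that it carry a fundamental class, which is precisely why the hypothesis asks that $X$ be a closed metric manifold; the target $Y$ needs only to be a metric space.

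The main obstacle I anticipate is step (iii), the surjective-ball lifting property. In the Riemannian case one simply lifts a minimizing geodesic horizontally, but for a general submetry the fibers need not be manifolds and minimizers between a point and a fiber need not exist unless one knows some completeness or properness. I would therefore want $X$ to be compact (which it is, being closed) so that the infimum $\dist_X(z,\pi^{-1}(b))=\dist_Y(\pi(z),b)$ is attained and yields the required $q$; I would state this attainment explicitly and note it is where compactness of $X$ enters, whereas the remaining estimates are insensitive to the loss of smoothness. The cleanest writeup, given that the excerpt already asserts ``the proof is entirely metric,'' is to point to each displayed inequality in the proof of Theorem \ref{thm:submersion} and remark that it depends only on (i)--(iii), which we have just re-derived from the submetry axiom together with compactness.
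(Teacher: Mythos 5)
Your proposal is correct and is essentially the paper's own argument: the paper proves this corollary precisely by observing that the proof of Theorem \ref{thm:submersion} is entirely metric and carries over verbatim, and your ingredients (i)--(iii) are exactly the facts needing re-verification, each of which you derive correctly from the submetry axiom. One remark: the compactness of $X$ you invoke for step (iii) is unnecessary, since the paper's definition of submetry uses \emph{closed} balls mapping onto closed balls, so taking $r=\dist_Y(\pi(z),\pi(p))$ the onto condition directly yields a point $q\in\pi^{-1}(\pi(p))$ with $\dist_X(z,q)\le r$, which is all the mapping-cylinder construction requires (and your item (ii) upgrades this to equality).
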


We should mention that, in the above corollary, $X$ does not need to be a manifold at all; for instance, $X$ can be replaced by a closed Alexandrov space, since such spaces have fundamental classes by the work of Yamaguchi in \cite{yama}.

		

This has some useful consequences that extend Theorem \ref{thm:submersion}:

\begin{cor}
    Suppose $M$ is a Riemannian manifold admitting a singular Riemannian foliation $\mathcal{F}$ with closed leaves. Then 
    \[
    \fillrad(M)\leq\frac12\max_{N\in \mathcal{F}} \, \{\diam N\}.
    \]
\end{cor}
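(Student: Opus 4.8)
The plan is to reduce this statement to Corollary \ref{cor:submetry} by exhibiting a submetry whose fibers are exactly the leaves of the singular Riemannian foliation $\mathcal{F}$. Given a singular Riemannian foliation with closed leaves on $M$, the leaf space $Y := M/\mathcal{F}$ carries a natural quotient (pseudo)metric: for two leaves $N_0, N_1$, one sets $\dist_Y([N_0],[N_1]) := \dist_M(N_0, N_1)$, the distance between the two leaves as subsets of $M$. First I would verify that this quotient projection $\pi: M \to Y$, sending each point to the leaf through it, is genuinely a submetry. The defining property of a singular Riemannian foliation is precisely that the foliation is \emph{transnormal}: geodesics issuing perpendicularly from a leaf remain perpendicular to every leaf they meet, which is exactly the infinitesimal statement underlying the equidistance of leaves. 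This equidistance is what forces $\pi$ to carry metric balls onto metric balls, i.e.\ to be a submetry.

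The key steps, in order, are as follows. First, establish that leaves are equidistant: for leaves $N_0, N_1$ and any point $z_0 \in N_0$, the distance $\dist_M(z_0, N_1)$ is independent of the choice of $z_0 \in N_0$ and equals $\dist_Y([N_0],[N_1])$; this is the standard transnormality property of singular Riemannian foliations. Second, use this equidistance to show $\pi$ is a submetry: one must check that $\pi(B(p,r)) = B(\pi(p), r)$ for every $p$ and $r>0$. The inclusion $\pi(B(p,r)) \subseteq B(\pi(p),r)$ is immediate since $\pi$ is $1$-Lipschitz; the reverse inclusion follows by lifting a minimizing path in $Y$ to $M$ via the equidistance property, producing a point in $B(p,r)$ over any prescribed nearby leaf. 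Third, observe that $Y$ is a metric space (the quotient pseudometric is a genuine metric because the leaves are closed and distinct, so leaves at distance zero coincide) and that $M$, being a closed Riemannian manifold, is in particular a metric manifold with a fundamental class. Fourth, identify the fibers: $\pi^{-1}([N]) = N$ for each leaf $N \in \mathcal{F}$, so that $\max_{y \in Y}\{\diam \pi^{-1}(y)\} = \max_{N \in \mathcal{F}}\{\diam N\}$, where the diameter is the extrinsic one inherited from $M$. Finally, apply Corollary \ref{cor:submetry} directly to conclude
\[
\fillrad(M) \leq \frac12 \max_{N \in \mathcal{F}} \{\diam N\}.
\]

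I expect the main obstacle to be the second step, verifying rigorously that $\pi$ is a submetry rather than merely a $1$-Lipschitz quotient map. The delicate point is the surjectivity $B(\pi(p),r) \subseteq \pi(B(p,r))$: given a leaf $N_1$ with $\dist_Y(\pi(p), [N_1]) = d < r$, one needs an actual point $q \in N_1$ with $\dist_M(p,q) \le r$, and producing such a $q$ requires knowing that the leaf-to-leaf distance is realized by a segment starting at the prescribed point $p$, which in turn rests on the transnormality and on $M$ being complete (guaranteed here by closedness). A secondary subtlety is confirming that the closedness of the leaves yields a Hausdorff metric quotient $Y$ and that $Y$ is the relevant metric space for invoking the corollary; since the corollary only requires $Y$ to be a metric space, no manifold structure on $Y$ is needed, which streamlines the argument considerably. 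Once $\pi$ is known to be a submetry, the remainder is a direct citation of Corollary \ref{cor:submetry}.
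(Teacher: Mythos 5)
Your proposal is correct and follows essentially the same route as the paper: the paper presents this corollary as a direct consequence of Corollary \ref{cor:submetry}, implicitly using the standard fact that the leaf-space projection of a singular Riemannian foliation with closed leaves is a submetry (via equidistance of leaves), which is exactly the reduction you carry out in detail. Your explicit verification that $\pi:M\to M/\mathcal{F}$ is a submetry with fibers the leaves is the content the paper leaves to the reader, and it is sound.
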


As an example of the above, recall that when $G$ is a compact  Lie group acting by isometries on a closed Riemannian manifold $M$, the orbits of $G$ form a singular Riemannian foliation in $M$, thus
the filling radius of $M$ cannot exceed one half the diameter of the orbits.

\begin{example}
We can give another example (apart from the Berger spheres mentioned in the introduction) where the bound in Corollary \ref{cor:submetry} is better than the one third--$\diam$ bound. 
Let $M^n$ be a cohomogeneity one manifold with Grove-Ziller diagram $H\subset K=K'\subset G$ \cite{grozil}. Then $M^n$ can be given a cohomogeneity one metric containing a product cylinder $G/H\times [-\ell,\ell]$ with $\ell$ as large as desired. Its diameter will exceed $2\ell$, while the diameter of the fibers of $M/G$ remains uniformly bounded above.
\end{example}

\section{Reach of the Kuratowski embedding}\label{sec:reach}

We start by recalling the definition of the reach of a subset of a metric space. 

\begin{definition}[\cite{federer}]
	Let $(X,\dist)$ be a metric space and a $A\subset X$ a subset. Then, we define the set of points having a unique metric projection in $A$ as 
	\[
	\unp(A)=\{x\in X\colon\, \text{there exits a unique }a\in A\text{ such that }\dist(x,A)=\dist(x,a)\}.
	\]
	For $a\in A$, we define the \emph{reach} of $A$ at $a$, denoted as $\reach(a,A)$, as 
	\[
	\reach(a,A)=\sup\{r\geq0\colon B_r(a)\subset\unp(A)\}. 
	\]
	Finally, we say that $A$ has \textit{positive reach} if $\reach(a,A)>0$ for every $a\in A$.
\end{definition}

Intuitively, a set $A$ has positive reach if and only if there exists an open neighbourhood of it admitting a metric projection into $A$. When the ambient space is a Riemannian manifold, Lytchak proved in \cite[Theorem 1.1]{lytchak} that compact subsets $Z$ of positive reach have an upper curvature bound with respect to their inner metric. For an applied use of the reach, the reader can see \cite{harvey}.

	In \cite[Remark 4.2]{federer}, Federer also notes that $A\subseteq\unp(A)$, that $\reach(a,A)$ is continuous with respect to $a\in A$ and also 
		\[
		0\leq\reach(\partial A,a)\leq\reach(a,A)\leq\infty
		\]
		if $a\in\partial A$. Moreover, if $\reach(A)>0$, then $A$ is closed.
More interestingly, Federer notices that, for subsets of the Euclidean space, $\reach(A)=\infty$ if and only if $A$ is a convex closed set.
The main result of this section states that, in contrast to smooth isometric embeddings of manifolds in the Euclidean space, the reach of the image of a manifold under the Kuratowski embedding vanishes. 


\begin{proof}[Proof of Theorem \ref{thm:reach}]
	We need to prove that, given an arbitrary $\epsilon>0$, and $p\in M$, there is some function $f\in B_\epsilon(p)$ and a point $q\neq p\in M$ such that
	\begin{equation}
	{\label{propiedad}}
		\dist_{\infty}(f,M)=\norm{\dist_p-f}_{\infty}=\norm{\dist_q-f}_{\infty}.
	\end{equation}
	
	Let $0<\delta<<\epsilon$, and choose $q\in M$ such that 
	\[
	\norm{\dist_p-\dist_q}_{\infty}=\dist_M(p,q)=\delta.
	\]
	Define 
	\[
	f:M\to \RR, \qquad f(s):=\frac{1}{2}(\dist_p(s)+\dist_q(s)).
	\]
	We claim that this function does not belong to the Kuratowski embedding of $M$: otherwise, there would be some $r\in M$ with  
	\[
	f(s)=\frac{1}{2}(\dist_p(s)+\dist_q(s))=\dist_r(s),
	\]
	and evaluating at $s=r$, we would obtain that $\dist_p(r)=0=\dist_q(r)$, and $p$, $q$ and $r$ would all be the same point. 
	Also, $f$ lies in $B_\epsilon(p)$ since  
	\[
	\norm{f-\dist_p}_{\infty}=\norm{\frac{1}{2}(\dist_p+\dist_q)-\dist_p}_{\infty}=\frac{1}{2}\norm{\dist_q-\dist_p}_{\infty}=\frac{\delta}{2}<\epsilon.
	\]
	Moreover, a similar computation yields that 
	\[
	\norm{f-\dist_q}_{\infty}=\frac{\delta}{2}=\norm{f-\dist_p}_{\infty}.
	\]
	
	It only remains to show that there is not any point in $M$ closer to $f$. So assume that  there were some $\dist_r\in M$ such that 
	\[
	\norm{\dist_r-f}_{\infty}<\norm{\dist_p-f}_{\infty}.
	\] Then 
	\[
	\norm{\dist_r-f}_{\infty}<\norm{\dist_p-f}_{\infty}=\frac{\delta}{2}.
	\]
	Evaluating at $r$, we would obtain
	\begin{equation}{\label{uno}}
		|\dist_r(r)-f(r)|=|f(r)|=\frac12\left(\dist_p(r)+\dist_q(r)\right)<\frac{\delta}{2}.
	\end{equation}On the other hand, by the triangle inequality,
	\begin{equation}{\label{dos}}
		\dist_p(r)+\dist_q(r)\geq\dist_M(p,q)=\delta
	\end{equation} 
	resulting in a contradiction. 
	Thus $f$ satisfies \eqref{propiedad} and the proof is finished.
	
\end{proof}	 
 
 In spite of the last theorem, we should remark that for every point $p\in M$, and for any $\eps>0$, there are functions in $B_{\epsilon}(p)\subset L^\infty(M)$
 whose closest point in $M$ is unique and equal to $p$:
 translating  $f_{\delta}:=\dist_p+\delta$, it is easy to compute that  $\norm{f_\delta-d_p}_{\infty}=\delta$ and  every other point in $M$ is farther away.

\section{Intermediate Filling Radius}

If, instead of the fundamental class of the manifold, we look at a different fixed homology class in $M$, we arrive at the concept of the intermediate filling radius introduced in \cite{lim2020vietoris}. We adapt their definition to the case of $M\subset L^\infty(M)$. 
\begin{definition}[Intermediate Filling Radius]
	For any integer $k\geq 1$, any abelian group $\FF$, and any homology class $\omega\in\mathrm{H}_k(M;\FF)$, we define the \emph{$k$-intermediate filling radius of $\omega$} as 
	\[
	\fillrad_k(M,\FF,\omega):=\inf\{r>0 : \iota_{r,*}(\omega)=0\},
	\]
	where $\iota_r:M\hookrightarrow U_r(M)$ is the isometric embedding. This gives us a map 
	\[
	\fillrad_k(M, \FF, \cdot):\mathrm{H}_k(M;\FF)\to\mathbb{R}_{\geq0}.
	\] 
	Finally, the \emph{intermediate $k$-filling radius of $M$} is the infimum of the function $\fillrad_k(M, \FF, \cdot)$ over $\mathrm{H}_k(M;\FF)$.
\end{definition}

We will usually omit $\FF$ from the notation if it does not create confusion. Our first observation is to restrict the possible values of $\fillrad_k$. 
\begin{prop}
For $M$ as in Theorem \ref{thm:lower_bound}, we have that 
\begin{equation}
\label{eq:fillrad_k_lower_bound}
	\fillrad_k(M) \geq  \frac14\left\{\,\inj M\,,\,\frac{\pi}{\sqrt{\Delta}}\,\right\}.
\end{equation}
\end{prop}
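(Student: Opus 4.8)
The plan is to mimic the proof of Theorem \ref{thm:lower_bound} almost verbatim, replacing the role of the fundamental class $[M]$ by an arbitrary class $\omega\in\mathrm{H}_k(M;\FF)$. The key observation is that the retraction constructed there is \emph{topological}: it does not use any special feature of the top homology. Concretely, for $0<R<R_0:=\tfrac14\min\{\inj M,\pi/\sqrt{\Delta}\}$, the proof of Theorem \ref{thm:lower_bound} produces a continuous map $\Phi\colon U_R(M)\to M$ (via the center of mass of the vicinity sets $A^R_f$) such that $\Phi\circ\iota_R=\mathrm{id}_M$. The existence of this retraction is all that is needed, and it is entirely independent of $k$.

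First I would fix any class $\omega\in\mathrm{H}_k(M;\FF)$ and any $R$ with $0<R<R_0$. Second, I would invoke the retraction $\Phi\colon U_R(M)\to M$ from Section \ref{sec:lower_bound}, which satisfies $\Phi\circ\iota_R=\mathrm{id}_M$. Applying the induced maps on $\mathrm{H}_k(\cdot;\FF)$ gives $\Phi_*\circ(\iota_R)_*=\mathrm{id}$ on $\mathrm{H}_k(M;\FF)$, so in particular $\Phi_*\bigl((\iota_R)_*(\omega)\bigr)=\omega$. Third, I would argue by contradiction: if $R>\fillrad_k(M,\FF,\omega)$, then by definition $(\iota_R)_*(\omega)=0$, whence $\omega=\Phi_*(0)=0$. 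Therefore, as long as $\omega\neq0$, every admissible $R$ in the infimum defining $\fillrad_k(M,\FF,\omega)$ must satisfy $R\geq R_0$, giving $\fillrad_k(M,\FF,\omega)\geq R_0$ for each nonzero class; taking the infimum over $\mathrm{H}_k(M;\FF)$ yields the stated bound \eqref{eq:fillrad_k_lower_bound}. (Note that if $\omega=0$ the filling radius is $0$ and is excluded from the meaningful statement, exactly as for the top class the bound is asserted for the nontrivial fundamental class.)

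The only genuine point to check is that nothing in the construction of $\Phi$ used $k=n$: the center-of-mass retraction depends solely on the metric geometry of $M$ and on the curvature/injectivity bound through \cite[Theorem 2.1]{afsari}, which guarantees the unique minimizer of $F^f_2$ whenever $\diam A^R_f\leq 2R<2R_0$. Since this hypothesis is purely about the diameter of the vicinity set and holds verbatim for every $R<R_0$, the retraction is available in all degrees simultaneously. I would therefore not reprove the retraction, but simply cite it and record the functoriality argument above.

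I expect the main (and essentially only) obstacle to be bookkeeping about which homology classes the bound applies to. The display \eqref{eq:fillrad_k_lower_bound} as written gives a lower bound for $\fillrad_k(M)$, the infimum over all classes; this is only sensible if the infimum is taken over \emph{nonzero} classes, since $\fillrad_k(M,\FF,0)=0$. The substantive content is that $\fillrad_k(M,\FF,\omega)\geq R_0$ for every $\omega\neq0$, and I would state the retraction-functoriality argument at that level, then pass to the infimum. No new geometric estimate beyond Theorem \ref{thm:lower_bound} is required.
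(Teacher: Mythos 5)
Your proposal is correct and follows exactly the paper's own argument: the paper's proof is a one-line invocation of the retraction $\Phi\colon U_R(M)\to M$ from Theorem \ref{thm:lower_bound}, which is valid for all $R<R_0$ and kills the possibility of $(\iota_R)_*(\omega)=0$ for nonzero $\omega$ by functoriality, precisely as you spell out. Your additional remark about excluding the zero class (since $\fillrad_k(M,\FF,0)=0$) is a reasonable bookkeeping point that the paper leaves implicit.
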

\begin{proof}
For $R$ smaller than the right hand side in $\eqref{eq:fillrad_k_lower_bound}$, we have a retraction of $U_R(M)$ onto $M$, as proven in Theorem $\ref{thm:lower_bound}$. 
\end{proof}

The following result can be considered a mild extension of \cite[Theorem 1.1]{liu} to this intermediate invariant. 

\begin{thm}
{\label{intermedios}}
	Let $f:M^m\to N^n$ be a Lipschitz map between closed manifolds such that  the induced map  $f_{k,*}:\mathrm{H}_k(M)\to\mathrm{H}_k(N)$ is  onto. Then \[\fillrad_k(M)\geq C^{-1}\fillrad_k(N),\]
	where 
	\[
	C=\dil f=\sup_{p\neq p'}\frac{\dist_N(f(p),f(p'))}{\dist_M(p,p')}.
	\]
\end{thm}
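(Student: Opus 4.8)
The plan is to transport filling data from $N$ back to $M$ along the Lipschitz map $f$. The key idea is that a Lipschitz map with $\dil f = C$ induces, at the level of the Kuratowski embeddings, a Lipschitz map between the ambient $L^\infty$ spaces that scales distances by at most $C$, and hence carries $r$-neighbourhoods of $M$ into $(Cr)$-neighbourhoods of $N$. Concretely, for $\phi\in L^\infty(M)$ one sets $(f_\#\phi)(y):=\inf_{x\in f^{-1}(y)}\phi(x)$ (or a similar push-forward), so that the diagram relating $\iota_r^M:M\hookrightarrow U_r(M)$ and $\iota_{Cr}^N:N\hookrightarrow U_{Cr}(N)$ commutes up to the identification of $M$ and $N$ with their images. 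The upshot I want is a continuous map $F:U_r(M)\to U_{Cr}(N)$ whose restriction to $M$ equals $f$.

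First I would make this push-forward precise and verify the two essential properties: (i) $F$ is well defined and continuous from $U_r(M)$ into $U_{Cr}(N)$, and (ii) the square
\begin{equation*}
\begin{array}{ccc}
M & \xrightarrow{\ \iota_r^M\ } & U_r(M)\\[2pt]
\downarrow{\scriptstyle f} & & \downarrow{\scriptstyle F}\\[2pt]
N & \xrightarrow{\ \iota_{Cr}^N\ } & U_{Cr}(N)
\end{array}
\end{equation*}
commutes. The distance estimate here is exactly where $C=\dil f$ enters: because $f$ expands distances by at most $C$, the Kuratowski distance $\|\dist_x^M-\dist_{x'}^M\|_\infty=\dist_M(x,x')$ controls $\dist_N(f(x),f(x'))$ up to the factor $C$, and this survives the neighbourhood level.

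Next I would run the contrapositive in homology. Suppose $r<C^{-1}\fillrad_k(N)$; then $Cr<\fillrad_k(N)$, so by definition $\iota_{Cr,*}^N(\omega')\neq 0$ for every class $\omega'$ realizing $\fillrad_k(N)$. Taking an $\omega\in\mathrm H_k(M)$ with $f_{k,*}(\omega)=\omega'$ (possible since $f_{k,*}$ is onto), the commuting square gives
\[
\iota_{Cr,*}^N\bigl(f_{k,*}(\omega)\bigr)=F_*\bigl(\iota_{r,*}^M(\omega)\bigr).
\]
If $\iota_{r,*}^M(\omega)$ were zero the right-hand side would vanish, contradicting $\iota_{Cr,*}^N(\omega')\neq0$. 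Hence $\iota_{r,*}^M(\omega)\neq0$, so $r\le\fillrad_k(M)$; letting $r\uparrow C^{-1}\fillrad_k(N)$ yields the claim.

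I expect the main obstacle to be step (i)–(ii): choosing a push-forward formula on $L^\infty$ that is simultaneously well defined, genuinely $C$-Lipschitz (so that it maps $U_r(M)$ into $U_{Cr}(N)$ rather than into a larger neighbourhood), and strictly compatible with $f$ on the embedded copies of $M$ and $N$. The naive formula need not restrict to $f$ on the nose, so some care—possibly using an extension/McShane-type argument or working with $\dist_N(\cdot,\cdot)\circ$(a chosen lift) instead of an infimum—will be required to force the square to commute exactly. Once the map $F$ with these two properties is in hand, the homological argument is immediate and mirrors the proof of \cite[Theorem 1.1]{liu}.
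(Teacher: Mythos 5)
Your proposal is correct and takes essentially the same route as the paper: the paper resolves exactly your step (i)--(ii) by the fallback you name, extending $x\mapsto\dist_{f(x)}$ to a $C$-Lipschitz map $\widetilde f:L^\infty(M)\to L^\infty(N)$ via a McShane-type (coordinate-wise) extension as in \cite{liu}, which automatically restricts to $f$ on $M$ and hence sends $U_r(M)$ into $U_{Cr}(N)$, after which your commutative square and contrapositive homology argument coincide with the paper's. The fiber-infimum formula should simply be discarded (as you suspected, it is neither well defined off the image of $f$ nor compatible with $f$ on $M$); the injectivity of $L^\infty(N)$ makes it unnecessary.
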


\begin{proof}
As in \cite{liu}, we extend the map $f:M\to N\to L^\infty(N)$ to a map $\widetilde{f}:L^\infty(M)\to L^\infty(N)$ that remains Lipschitz with the same dilation as $f$. Then 
\[
\tildef(U_R(M))\subset U_{C\cdot R}(N),
\]
and we have a commutative diagram in homology
        \[
\xymatrix{\mathrm{H}_k(M)\ar[d]_{\iota_{*}}\ar[r]^{f_{k,*}} &\mathrm{H}_k(N)\ar[d]^{\iota_{*}}\\
		\mathrm{H}_k(U_{R}(M))\ar[r]^{\widetilde{f}_{k,*}}&\mathrm{H}_k(U_{C\cdot R}(N))}
		\]
	If $C\cdot R<\fillrad_k(N)$, there is some $ a\in\mathrm{H}_k(N)$ such that $\iota_{*}(a)\neq 0$, hence there is some $b\in\mathrm{H}_k(M)$ such that 
	$\widetilde{f}_{k,*}\iota_{*}(b)\neq 0$; then  $\iota_{*}(b)\neq 0$, and $R<\fillrad_k(M)$.
	
	
\end{proof}


	

\nocite{*}
\printbibliography

@article {katz,
	AUTHOR = {Katz, Mikhail},
	TITLE = {The filling radius of two-point homogeneous spaces},
	JOURNAL = {J. Differential Geom.},
	FJOURNAL = {Journal of Differential Geometry},
	VOLUME = {18},
	YEAR = {1983},
	NUMBER = {3},
	PAGES = {505--511},
	ISSN = {0022-040X},
	URL = {http://projecteuclid.org/euclid.jdg/1214437785},
}

@article {katz_CP2,
    AUTHOR = {Katz, Mikhail},
     TITLE = {The rational filling radius of complex projective space},
   JOURNAL = {Topology Appl.},
  FJOURNAL = {Topology and its Applications},
    VOLUME = {42},
      YEAR = {1991},
    NUMBER = {3},
     PAGES = {201--215},
      ISSN = {0166-8641},
       URL = {https://doi.org/10.1016/0166-8641(91)90122-3},
}

@article {wilhelm,
	AUTHOR = {Wilhelm, Jr., Frederick H.},
	TITLE = {On the filling radius of positively curved manifolds},
	JOURNAL = {Inv. Math.},
	FJOURNAL = {Inventiones Mathematicae},
	VOLUME = {107},
	YEAR = {1992},
	NUMBER = {3},
	PAGES = {653--668},
	ISSN = {0020-9910},
	DOI = {10.1007/BF01231906},
	URL = {https://doi.org/10.1007/BF01231906},
}

@article {gromov,
	AUTHOR = {Gromov, Mikhael},
	TITLE = {Filling {R}iemannian manifolds},
	JOURNAL = {J. Differential Geom.},
	FJOURNAL = {Journal of Differential Geometry},
	VOLUME = {18},
	YEAR = {1983},
	NUMBER = {1},
	PAGES = {1--147},
	ISSN = {0022-040X},
	URL = {http://projecteuclid.org/euclid.jdg/1214509283},
}

@article {guijarro,
	AUTHOR = {Guijarro, Luis and Walschap, Gerard},
	TITLE = {Submetries vs. submersions},
	JOURNAL = {Rev. Mat. Iberoam.},
	FJOURNAL = {Revista Matem\'{a}tica Iberoamericana},
	VOLUME = {27},
	YEAR = {2011},
	NUMBER = {2},
	PAGES = {605--619},
	ISSN = {0213-2230},
	DOI = {10.4171/RMI/648},
	URL = {https://doi.org/10.4171/RMI/648},
}

@article {berestovski,
	AUTHOR = {Berestovskii, V. N.},
	TITLE = {``{S}ubmetries'' of three-dimensional forms of nonnegative
	curvature},
	JOURNAL = {Sibirsk. Mat. Zh.},
	FJOURNAL = {Akademiya Nauk SSSR. Sibirskoe Otdelenie. Sibirsk.
	Mat. J},
	VOLUME = {28},
	YEAR = {1987},
	NUMBER = {4},
	PAGES = {44--56, 224},
	ISSN = {0037-4474},
}

@article {guijaandbere,
	AUTHOR = {Berestovskii, V. N. and Guijarro, Luis},
	TITLE = {A metric characterization of {R}iemannian submersions},
	JOURNAL = {Ann. Global Anal. Geom.},
	FJOURNAL = {Annals of Global Analysis and Geometry},
	VOLUME = {18},
	YEAR = {2000},
	NUMBER = {6},
	PAGES = {577--588},
	ISSN = {0232-704X},
	DOI = {10.1023/A:1006683922481},
	URL = {https://doi.org/10.1023/A:1006683922481},
}

@misc{lim2020vietoris,
  doi = {10.48550/ARXIV.2001.07588},
  url = {https://arxiv.org/abs/2001.07588},
  author = {Lim, Sunhyuk and Memoli, Facundo and Okutan, Osman Berat},
  title = {Vietoris-Rips persistent homology, injective metric spaces, and the filling radius},
  publisher = {arXiv},
  year = {2020},
}

@article {liu,
	AUTHOR = {Liu, Luofei},
	TITLE = {The mapping properties of filling radius and packing radius
	and their applications},
	JOURNAL = {Differential Geom. Appl.},
	FJOURNAL = {Differential Geometry and its Applications},
	VOLUME = {22},
	YEAR = {2005},
	NUMBER = {1},
	PAGES = {69--79},
	ISSN = {0926-2245},
	DOI = {10.1016/j.difgeo.2004.07.006},
	URL = {https://doi.org/10.1016/j.difgeo.2004.07.006},
}

@book {docarmo,
	AUTHOR = {do Carmo, Manfredo Perdig\~{a}o},
	TITLE = {Riemannian geometry},
	SERIES = {Mathematics: Theory \& Applications},
	NOTE = {Translated from the second Portuguese edition by Francis
	Flaherty},
	PUBLISHER = {Birkh\"{a}user Boston, Inc., Boston, MA},
	YEAR = {1992},
	PAGES = {xiv+300},
	ISBN = {0-8176-3490-8},
	DOI = {10.1007/978-1-4757-2201-7},
	URL = {https://doi.org/10.1007/978-1-4757-2201-7},
}

@article {yokota,
	AUTHOR = {Yokota, Takumi},
	TITLE = {On the filling radius of positively curved {A}lexandrov
	spaces},
	JOURNAL = {Math. Z.},
	FJOURNAL = {Mathematische Zeitschrift},
	VOLUME = {273},
	YEAR = {2013},
	NUMBER = {1-2},
	PAGES = {161--171},
	ISSN = {0025-5874},
	DOI = {10.1007/s00209-012-0999-7},
	URL = {https://doi.org/10.1007/s00209-012-0999-7},
}

@article {katzneig,
	AUTHOR = {Katz, Mikhail},
	TITLE = {On neighborhoods of the {K}uratowski imbedding beyond the
	first extremum of the diameter functional},
	JOURNAL = {Fund. Math.},
	FJOURNAL = {Polska Akademia Nauk. Fundamenta Mathematicae},
	VOLUME = {137},
	YEAR = {1991},
	NUMBER = {3},
	PAGES = {161--175},
	ISSN = {0016-2736},
	DOI = {10.4064/fm-137-3-161-175},
	URL = {https://doi.org/10.4064/fm-137-3-161-175},
}

@book {burago,
    AUTHOR = {Burago, Dmitri and Burago, Yuri and Ivanov, Sergei},
     TITLE = {A course in metric geometry},
    SERIES = {Graduate Studies in Mathematics},
    VOLUME = {33},
 PUBLISHER = {American Mathematical Society, Providence, RI},
      YEAR = {2001},
     PAGES = {xiv+415},
      ISBN = {0-8218-2129-6},
  MRCLASS = {53C23},
  MRNUMBER = {1835418},
MRREVIEWER = {Mario Bonk},
      DOI = {10.1090/gsm/033},
      URL = {https://doi.org/10.1090/gsm/033},
}

@article {guth,
	AUTHOR = {Guth, Larry},
	TITLE = {Notes on {G}romov's systolic estimate},
	JOURNAL = {Geom. Dedicata},
	FJOURNAL = {Geometriae Dedicata},
	VOLUME = {123},
	YEAR = {2006},
	PAGES = {113--129},
	ISSN = {0046-5755},
	DOI = {10.1007/s10711-006-9111-y},
	URL = {https://doi.org/10.1007/s10711-006-9111-y},
}

@article {nabu,
	AUTHOR = {Nabutovsky, Alexander and Rotman, Regina and Sabourau,
	St\'{e}phane},
	TITLE = {Sweepouts of closed {R}iemannian manifolds},
	JOURNAL = {Geom. Funct. Anal.},
	FJOURNAL = {Geometric and Functional Analysis},
	VOLUME = {31},
	YEAR = {2021},
	NUMBER = {3},
	PAGES = {721--766},
	ISSN = {1016-443X},
	DOI = {10.1007/s00039-021-00575-3},
	URL = {https://doi.org/10.1007/s00039-021-00575-3},
}

@article {bangert,
	AUTHOR = {Bangert, Victor},
	TITLE = {Sets with positive reach},
	JOURNAL = {Arch. Math. (Basel)},
	FJOURNAL = {Archiv der Mathematik},
	VOLUME = {38},
	YEAR = {1982},
	NUMBER = {1},
	PAGES = {54--57},
	ISSN = {0003-889X},
	DOI = {10.1007/BF01304757},
	URL = {https://doi.org/10.1007/BF01304757},
}

@article {lytchak,
	AUTHOR = {Lytchak, Alexander},
	TITLE = {On the geometry of subsets of positive reach},
	JOURNAL = {Manuscripta Math.},
	FJOURNAL = {Manuscripta Mathematica},
	VOLUME = {115},
	YEAR = {2004},
	NUMBER = {2},
	PAGES = {199--205},
	ISSN = {0025-2611},
	DOI = {10.1007/s00229-004-0491-8},
	URL = {https://doi.org/10.1007/s00229-004-0491-8},
}

@article {federer,
	AUTHOR = {Federer, Herbert},
	TITLE = {Curvature measures},
	JOURNAL = {Trans. Amer. Math. Soc.},
	FJOURNAL = {Transactions of the American Mathematical Society},
	VOLUME = {93},
	YEAR = {1959},
	PAGES = {418--491},
	ISSN = {0002-9947},
	DOI = {10.2307/1993504},
	URL = {https://doi.org/10.2307/1993504},
}

@article {afsari,
    AUTHOR = {Afsari, Bijan},
     TITLE = {Riemannian {$L^p$} center of mass: existence, uniqueness, and convexity},
   JOURNAL = {Proc. Amer. Math. Soc.},
  FJOURNAL = {Proceedings of the American Mathematical Society},
    VOLUME = {139},
      YEAR = {2011},
    NUMBER = {2},
     PAGES = {655--673},
      ISSN = {0002-9939},
       DOI = {10.1090/S0002-9939-2010-10541-5},
       URL = {https://doi.org/10.1090/S0002-9939-2010-10541-5},
}

@article {petersen,
    AUTHOR = {Greene, Robert E. and Petersen, V, Peter},
     TITLE = {Little topology, big volume},
   JOURNAL = {Duke Math. J.},
  FJOURNAL = {Duke Mathematical Journal},
    VOLUME = {67},
      YEAR = {1992},
    NUMBER = {2},
     PAGES = {273--290},
      ISSN = {0012-7094},
       DOI = {10.1215/S0012-7094-92-06710-X},
       URL = {https://doi.org/10.1215/S0012-7094-92-06710-X},
}

@article {yama,
    AUTHOR = {Yamaguchi, Takao},
     TITLE = {Simplicial volumes of {A}lexandrov spaces},
   JOURNAL = {Kyushu J. Math.},
  FJOURNAL = {Kyushu Journal of Mathematics},
    VOLUME = {51},
      YEAR = {1997},
    NUMBER = {2},
     PAGES = {273--296},
      ISSN = {1340-6116},
       DOI = {10.2206/kyushujm.51.273},
       URL = {https://doi.org/10.2206/kyushujm.51.273},
}

@article {grokar,
    AUTHOR = {Grove, Karsten and Karcher, Hermann},
     TITLE = {How to conjugate {$C^{1}$}-close group actions},
   JOURNAL = {Math. Z.},
  FJOURNAL = {Mathematische Zeitschrift},
    VOLUME = {132},
      YEAR = {1973},
     PAGES = {11--20},
      ISSN = {0025-5874},
       DOI = {10.1007/BF01214029},
       URL = {https://doi.org/10.1007/BF01214029},
}

@article {kar,
    AUTHOR = {Karcher, H.},
     TITLE = {Riemannian center of mass and mollifier smoothing},
   JOURNAL = {Comm. Pure Appl. Math.},
  FJOURNAL = {Communications on Pure and Applied Mathematics},
    VOLUME = {30},
      YEAR = {1977},
    NUMBER = {5},
     PAGES = {509--541},
      ISSN = {0010-3640},
       DOI = {10.1002/cpa.3160300502},
       URL = {https://doi.org/10.1002/cpa.3160300502},
}

@article {grozil,
    AUTHOR = {Grove, Karsten and Ziller, Wolfgang},
     TITLE = {Curvature and symmetry of {M}ilnor spheres},
   JOURNAL = {Ann. of Math. (2)},
  FJOURNAL = {Annals of Mathematics. Second Series},
    VOLUME = {152},
      YEAR = {2000},
    NUMBER = {1},
     PAGES = {331--367},
      ISSN = {0003-486X},
       DOI = {10.2307/2661385},
       URL = {https://doi.org/10.2307/2661385},
}

@article {harvey,
    AUTHOR = {Berenfeld, Cl\'{e}ment and Harvey, John and Hoffmann, Marc and
              Shankar, Krishnan},
     TITLE = {Estimating the reach of a manifold via its convexity defect
              function},
   JOURNAL = {Discrete Comput. Geom.},
  FJOURNAL = {Discrete \& Computational Geometry. An International Journal
              of Mathematics and Computer Science},
    VOLUME = {67},
      YEAR = {2022},
    NUMBER = {2},
     PAGES = {403--438},
      ISSN = {0179-5376},
       DOI = {10.1007/s00454-021-00290-8},
       URL = {https://doi.org/10.1007/s00454-021-00290-8},
}

\end{document}